\newtheorem{lemma}{Lemma}
\newtheorem{proposition}[lemma]{Proposition}
\newtheorem{remark}[lemma]{Remark}
\newtheorem{theorem}[lemma]{Theorem}
\newtheorem{definition}[lemma]{Definition}
\newcommand\blfootnote[1]{%
  \begingroup
  \renewcommand\thefootnote{}\footnote{#1}%
  \addtocounter{footnote}{-1}%
  \endgroup}
\newcommand{\e}{\varepsilon}
\newcommand{\disp}{\displaystyle}
\def\ZZ{\mathbb Z}
\def\eps{\varepsilon}
\title{Asymptotic behaviour of ground states for mixtures\\ of ferromagnetic and antiferromagnetic interactions\\ in a dilute regime}
\author{Andrea Braides\\ \small Dipartimento di Matematica, Universit\`a di Roma Tor Vergata
\\ \small  via della ricerca scientifica 1, 00133 Roma, Italy\\ \\  Andrea Causin\ \\ \small
DADU, Universit\`a di Sassari\\ \small
 piazza Duomo 6, 07041 Alghero (SS), Italy \\ \\ Andrey Piatnitski\\ \small
The Arctic University of Norway, Campus in Narvik,  N-8505 Narvik, Norway
               \\ \small Institute for Information Transmission Problems RAS, \\ \small Bolshoi Karetnyi 19, Moscow, 127051, Russia\\ \\
Margherita Solci \\ \ \small
DADU, Universit\`a di Sassari\\ \small
 piazza Duomo 6, 07041 Alghero (SS), Italy}
\date{}                                           
\begin{document}
\maketitle

\begin{abstract}
We consider randomly distributed mixtures of bonds of ferromagnetic and antiferromagnetic type in a two-dimensional square lattice with probability $1-p$ and $p$, respectively, according to an i.i.d.~random variable.
We study minimizers of the corresponding nearest-neighbour spin energy on large domains in $\ZZ^2$. 
We prove that there exists $p_0$ such that for $p\le p_0$ such minimizers are characterized by a majority phase; i.e., they take identically the value $1$ or $-1$ except for small disconnected sets. A deterministic analogue is also proved.
\end{abstract}

\blfootnote{{\bf MSC 2000 subject classifications}: 82B20, 82D30, 49K45, 60D05}
\blfootnote{{\bf Keywords and phrases}: Ising models, random spin systems, ground states, asymptotic analysis of periodic media}

\section{Introduction}
We consider randomly distributed mixtures of bonds of ferromagnetic and antiferromagnetic type in a two-dimensional square lattice with probability $1-p$ and $p$, respectively, according to an i.i.d.~random variable.
For each realization $\omega$ of that random variable, we consider, for each bounded region $D$, the energy
$$
F^\omega(u, D) =-\sum_{i,j}c^\omega_{ij}u_i u_j,
$$
where the sum runs over nearest-neighbours in the square lattice contained in $D$,
$u_i\in \{-1,+1\}$ is a spin variable, and $c^\omega_{ij}\in \{-1,+1\}$ are interaction coefficients
corresponding to the realization.
\begin{figure}[h!]
\centerline{\includegraphics [width=1.7in]{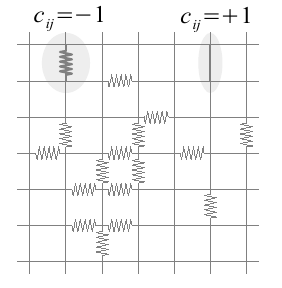}}
\caption{representation of a portion of spin system for some $c_{ij}=c^\omega_{ij}$}\label{FIG1}
   \end{figure}
A portion of such a system is pictured in Fig.~\ref{FIG1}: ferromagnetic bonds; i.e, when $c^\omega_{ij}=1$, are pictured as straight segments, while antiferromagnetic bonds are pictured as wiggly ones (as in the two examples highlighted by the gray regions, respectively).

In this paper we analyze ground states; i.e., absolute minimizers, for such energies. This is a non trivial issue since in general, ground states $\{u_j\}$ are {\em frustrated}\/; i.e., the energy cannot be separately minimized on all pairs of nearest neighbors. In other words, minimizing arrays $\{u_j\}$  may not satisfy simultaneously $u_i=u_j$ for all $i,j$ such that $c^\omega_{ij}=+1$ and $u_i=-u_j$ for all $i,j$ such that $c^\omega_{ij}=-1$. However,
in \cite{BP-hom} it is shown that if the antiferromagnetic links are contained in well-separated compact regions, then the ground states are characterized by a ``majority phase''; i.e., they mostly take only the value $1$ (or $-1$) except for nodes close to the ``antiferromagnetic islands''. In the case of random interactions we show that this is the same in the {\em dilute case}; i.e., when the probability $p$ of antiferromagnetic interactions is sufficiently small. More precisely, we show that there exists $p_0$ such that if $p$ is not greater than
$p_0$ then almost surely for all sufficiently large regular bounded domain $D\subset{\mathbb R}^2$ the minimizers of the energy $F(\cdot,D)$ are characterized by a majority phase.

The proof of our result relies on a scaling argument as follows: we remark that proving the existence of majority phases is equivalent to ruling out the possibility of large interfaces separating zones where a ground state $u$ equals $1$ and $-1$, respectively. Such interfaces may exist only if the percentage of antiferromagnetic bonds on the interface is larger than $1/2$. We then estimate the probability of such an interface with a fixed length and decompose a separating interface into portions of at most that length, to prove a contradiction if $p$ is small enough.

Interestingly, the probabilistic proof outlined above carries on also to a deterministic periodic setting; i.e., for energies
$$
F(u, D) =-\sum_{i,j}c_{ij}u_i u_j
$$
such that $c_{ij}\in \{-1,+1\}$ and there exists $N\in\mathbb N$
such that $c_{i+k\,j+k}= c_{ij}$ for all $i$ and $j\in\mathbb Z^2$ and $k\in N{\mathbb Z}^2$.
In this case ground states of $F$ may sometimes be characterized more explicitly
and exhibit various types of configurations independently
of the percentage of antiferromagnetic bonds: up to boundary effects, there can be
a finite number of periodic textures, or configurations characterized by layers of periodic patterns in one direction, or we might have arbitrary configurations of minimizers with no periodicity (see the examples in \cite{BC}).
We show that there exists $p_0$ such that if the percentage $p$ of antiferromagnetic interactions is not greater than $p_0$ then the proportion of $N$-periodic systems $\{c_{ij}\}$ such that the minimizers of the energy $F(\cdot,D)$ are characterized by a majority phase for all $D\subset{\mathbb R}^2$ bounded domain large enough tends to $1$ as $N$ tends to $+\infty$. The probabilistic arguments are substituted by a combinatorial computation, which also allows a description of the size of the separating interfaces in terms of $N$.

This work is part of a general analysis of variational problems in lattice systems (see \cite{Seoul} for an overview), most results dealing with spin systems focus on ferromagnetic Ising systems
at zero temperature, both on a static framework (see \cite{CDL,AG,BCPS}) and a dynamic framework (see \cite{BGN,BSc,BSc2,BSo}). In that context, random distributions of bonds have been considered in \cite{BP-hom,BP-dil} (see also \cite{BP-mem}), and their analysis is linked to some recent advances in Percolation Theory
(see \cite{Boivin,CePi1,CeTe,Garet,W}).
 A first paper dealing with antiferromagnetic interactions is \cite{ABC}, where non-trivial oscillating ground states are observed and the corresponding surface tensions are computed. A related variational motion of crystalline mean-curvature type has been recently described in \cite{BCY}, highlighting new effect due to surface microstructure. The classification of periodic systems mixing ferromagnetic and antiferromagnetic interactions that can be described by surface energies is the subject of \cite{BC}.
In \cite{BP-hom}, as mentioned above, the case of well-separated antiferromagnetic island is studied.
We note that in those papers the analysis is performed by a description of a macroscopic surface tension,
which provides the energy density of a continuous surface energy obtained as a discrete-to-continuum $\Gamma$-limit \cite{GCB} obtained by scaling the energy $F$ on lattices with vanishing lattice space.
In the present paper we do not address the formulation in terms of the $\Gamma$-limit but only
study ground states.

\section{Random media}

Given a probability space $(\Omega,\mathcal{F},\mathbf{P})$ we
consider a {\em Bernoulli bond percolation} model in $\mathbb Z^2$. This means that to each bond $(i,j)$, $i,j\in\mathbb Z^2$, $|i-j|=1$, in $\mathbb Z^2$  we associate a random variable $c_{ij}$ and assume that
these random variables are i.i.d.~and that they take on the value  $+1$ with probability $1-p$, and the value $-1$ with probability $p$, where $0<p<1$. The detailed description of the Bernoulli bond percolation model can be found for instance in \cite{Grimm}

We denote by $\mathcal N$ the set of {\em nearest neighbors}
$$\mathcal N=\{\{i,j\}: i,j\in \mathbb Z^2, \|i-j\|=1\}$$
and, for each $\{i,j\}$ in $\mathcal N$, $[i,j]$ will be the closed segment with endpoints $i$ and $j$.

\begin{definition}[random stationary spin system]\label{def-random} A (ferromagnetic/antiferromagnetic)
{\em spin system} is a realization of the random function $c(\{i,j\})=c_{ij}(\omega)\in\{\pm1\}$ defined on
$\mathcal N$. We will drop the dependence on $\omega$ and simply write $c_{ij}$.
The pairs $\{i,j\}$ with $c_{ij}=+1$ are called {\em ferromagnetic
bonds}, the pairs $\{i,j\}$ with $c_{ij}=-1$ are called {\em antiferromagnetic
bonds}.
\end{definition}

\subsection{Estimates on separating paths}

We say that a finite sequence
$(i_0,\ldots, i_k)$ 
is a {\em path} in $\mathbb Z^2$ 
if
 $\{i_s,i_{s+1}\}\in\mathcal N$
 for any $s=0,\ldots, k-1$ and the segment $[i_s,i_{s+1}]$
 is different from the segment $[i_t,i_{t+1}]$
 for any $s\neq t$.
 The path is {\em closed} if $i_0=i_k$. The number $k$ is the {\em length} of $\gamma$, denoted by $l(\gamma)$, and we call
 $\mathcal P_k$ the set of the {\em paths with length $k$}.
 To each path $\gamma\in\mathcal P_k$ we associate
the corresponding curve $\tilde\gamma$ of length $k$
in $\mathbb R^2$ given by
\begin{equation}\label{tildegamma}
\tilde\gamma=\bigcup_{s=0}^{k-1}[i_s, i_{s+1}]+\Big(\frac{1}{2}, \frac{1}{2}\Big).
\end{equation}

\begin{figure}[h!]
\centerline{\includegraphics [width=4in]{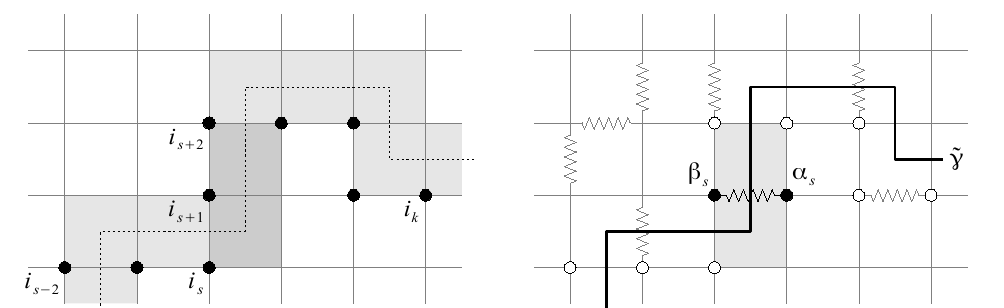}}
\caption{a path $\gamma$ and the corresponding curve $\tilde \gamma$}\label{FIG2}
   \end{figure}
Note that $\tilde\gamma$ is a closed curve if and only if $\gamma$ is closed.
In Fig.~\ref{FIG2} we picture a path (the dotted sites of the left-hand side) and
the corresponding curve (on the right-hand side picture).

Given two paths $\gamma=(i_0,\dots , i_k)$ and $\delta=(j_0,\dots , j_h)$, if $i_k=j_0$ and the sequence $(i_0,\dots , i_k,j_1,\dots , j_h)$ is a path, the latter is called the {\em concatenation} 
 of $\gamma$ and $\delta$ and it is noted by $\gamma\ast\delta$.
 %

We note that for each $s$ the intersection $(i_s+[0,1]^2)\cap (i_{s+1}+[0,1]^2)$
is a segment with endpoints $\{\alpha_s, \beta_s\}\in \mathcal N$;
then, given a spin system $\{c_{ij}\}$, for each path $\gamma=(i_0,\ldots,i_k)$ we can define
the {\em number of antiferromagnetic bonds} of $\gamma$ as
\begin{equation}\label{mu}
\mu(\gamma)=\mu(\gamma, \{c_{ij}\})=\#\{s\in \{0,\dots, k-1\}: c_{\alpha_s\beta_s}=-1\}.
\end{equation}
If $\tilde\gamma$ is the curve corresponding to $\gamma$ defined above,
then the number $\mu(\gamma)$ counts the antiferromagnetic interactions ``intersecting'' $\tilde\gamma$ (see Fig.~\ref{FIG2}).


\begin{definition}[Separating paths]\label{sep_bis}
A path $\gamma$ of length $k$ is a {\em separating path}
for a spin system $\{c_{ij}\}$ if $\mu(\gamma)>k/2$.
\end{definition}

\begin{remark}\rm The terminology separating path evokes the fact that only
closed separating
paths may enclose (separate) regions where a minimal $\{u_i\}$ is constant.
Indeed, if we have $u_i=1$ on a finite set $A$
of nodes in $\mathbb Z^2$ which is connected (i.e., for every pair $i,j$ of points in
$A$ there is a path of points in $A$ with $i$ as initial point and $j$ as final point)
and $u_i=-1$ on all neighbouring nodes, then the boundary of $A$ (i.e., the set of
points $i\in A$ with a nearest neighbour not in $A$) determines a path. If such a
path is not separating then the function $\tilde u$ defined as $\tilde u_i=-u_i$ for $i\in A$
and $\tilde u_i=u_i$ elsewhere has an energy strictly lower than $u$.
\end{remark}

\begin{remark}\rm
For a path $\gamma$ of length $l(\gamma)=k$ the probability that $\gamma$ be separating
can be estimated as follows
\begin{equation}\label{prob_est1}
\mathbf{P}\{\mu(\gamma)>k/2\}\leq p^{k/2}2^k
\end{equation}
Indeed, the probability that $c_{ij}$ is equal to ${-1}$ at $k/2$ fixed places is equal to
$p^{k/2}$. Since $\Big(\begin{array}{c}k\\k/2\end{array}\Big)$ does not exceed $2^k$, the desired
estimates follows.
\end{remark}

\begin{lemma}\label{l_proba1}
There exists $p_0>0$ such that for any $\varkappa>0$ and for all $p<p_0$ almost surely
for sufficiently large $n$ in a cube $Q_n=[0,n]^2$ there is no a separating path $\gamma$
with $l(\gamma)\geq (\log(n))^{1+\varkappa}$.
\end{lemma}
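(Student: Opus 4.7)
The plan is a straightforward first-moment argument combined with the Borel--Cantelli lemma, with the key input being the estimate \eqref{prob_est1} together with a counting bound on the number of candidate paths.

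First I would bound the number of paths of length $k$ contained in $Q_n$. Since by definition a path is edge-self-avoiding, from a fixed starting vertex there are at most $4$ choices for the first step and at most $3$ for each subsequent step (the direction of the preceding edge being excluded), yielding at most $4\cdot 3^{k-1}\le 4^k$ paths. Summing over the $(n+1)^2$ possible starting points in $Q_n\cap\ZZ^2$, the total number of length-$k$ paths in $Q_n$ is at most $C n^2\, 4^k$. Combining with \eqref{prob_est1} and a union bound,
\[
\mathbf{P}\bigl\{\exists \gamma\subset Q_n,\ l(\gamma)=k,\ \gamma \text{ separating}\bigr\}\le C n^2\cdot 4^k\cdot p^{k/2}2^k=C n^2 \bigl(8\sqrt p\bigr)^k.
\]

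Now I would fix $p_0>0$ so small that $q:=8\sqrt{p_0}<1$ (e.g., any $p_0<1/64$ works); then for every $p<p_0$ and $K_n:=\lceil(\log n)^{1+\varkappa}\rceil$, summing the geometric series over $k\ge K_n$ gives
\[
\mathbf{P}\bigl\{\exists \gamma\subset Q_n,\ l(\gamma)\ge K_n,\ \gamma \text{ separating}\bigr\}\le \frac{C}{1-q}\,n^2 q^{K_n}= C'\, n^2\exp\bigl(-c(\log n)^{1+\varkappa}\bigr),
\]
with $c:=-\log q>0$. Since $\varkappa>0$, the right-hand side equals $C' n^2 n^{-c(\log n)^\varkappa}$, which decays faster than any negative power of $n$ and is therefore summable in $n$. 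The Borel--Cantelli lemma then yields that almost surely only finitely many of these events occur, which is exactly the claim.

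The only subtle point is the counting step: one must remember that the paths in Definition \ref{sep_bis} are edge-self-avoiding rather than vertex-self-avoiding, so the naive bound $4\cdot 3^{k-1}$ rather than a tighter self-avoiding-walk bound is the right one. This slightly larger base is harmless because it is absorbed into the choice of $p_0$; the important structural feature is that the counting factor is exponential in $k$ while the threshold $K_n$ grows faster than any power of $\log n$, so the polynomial prefactor $n^2$ from the starting-vertex count is overwhelmed. Note that $p_0$ depends only on the lattice geometry (through the base $8$) and in particular is independent of $\varkappa$, as required.
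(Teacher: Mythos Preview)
Your proof is correct and follows essentially the same path-counting plus first-moment plus Borel--Cantelli argument as the paper's own proof. The only cosmetic differences are that the paper uses the slightly sharper bound $3^k$ for the number of paths from a fixed vertex (leading to $p_0=(1/12)^2$ rather than your $p_0<1/64$) and sums over $k$ in the finite range $[\,(\log n)^{1+\varkappa},\,n^2\,]$ rather than over all $k\ge K_n$, but neither choice affects the argument.
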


\begin{proof} We use the method that in percolation theory often called "path counting" \ argument.
The number of paths of length $k$ starting at the origin is not greater than $3^k$.
Therefore, in view of \eqref{prob_est1}  the probability that there exists a separating path
of length $k$ that starts at the origin is not greater than $p^{k/2}2^k3^k$.
Letting $p_0=(1/12)^2$ we have
$$
p^{k/2}2^k 3^k\leq 2^{-k}\qquad \hbox{for all }p\leq p_0.
$$
Then, if $p\leq p_0$, the probability that there exists a separating path of length
$k$ in a cube $Q_n$ does not exceed $n^2 2^{-k}$.
For $k\geq\log(n)^{1+\varkappa}$ this yields
$$
\mathbf{P}\{\hbox{there exists a separating path }\gamma\subset Q_n\hbox{ of length }k\}
$$
$$
\leq n^2 2^{-\log(n)^{1+\varkappa}}=n^{2-c_1\log(n)^\varkappa}
$$
with $c_1=\log 2$.
Finally, summing up in $k$ over the interval $[\log(n)^{1+\varkappa},n^2]$
we obtain
$$
\mathbf{P}\{\hbox{there exists a separating path }\gamma\subset Q_n\hbox{ such that }
l(\gamma)\geq \log(n)^{1+\varkappa}\}
$$
$$
\leq n^{4-c_1\log(n)^\varkappa}
$$
Since for large $n$ the right-hand side here decays faster than any negative power of $n$,
the desired statement follows from the Borel-Cantelli lemma.
\end{proof}

\subsection{Geometry of minimizers in the random case}

Let $D$ be a bounded open subset of $\mathbb R^2$ 
and $u\colon D\cap\mathbb Z^2\to\{\pm 1\}$.
Then, denoting by $\mathcal N(D)$ the set of nearest neighbors in $D$, $F(u,D)$ is defined by
\begin{equation}\label{effe_bis}
F(u,D)=-\sum_{\{i,j\}\in \mathcal N(D)}c_{ij}u_iu_j.
\end{equation}
Note that the energy depends on $\omega$ through $c_{ij}$. We will characterize the almost-sure behaviour of ground states for such energies.

We define the {\em interface} $S(u)$ as
$$S(u)=S(u;D)=\{\{i,j\}\in\mathcal N(D):
u_iu_j=-1\};$$
we associate to each pair $\{i,j\}\in S(u)$ the segment $s_{ij}=\overline Q_i\cap \overline Q_j$,
where $Q_i$ is the coordinate unit open square centered at $i$, and consider the
set
\begin{equation}\label{esse}
\Sigma(u)=\Sigma(u;D)=\bigcup_{\{i,j\}\in S(u)} s_{ij}.
\end{equation}
If we extend the function $u$ in $\bigcup_{i\in D\cap\mathbb Z^2} Q_i$
by setting $u=u_i$ in $Q_i$, and define
\begin{equation}\label{q}
q(D)=\hbox{int}\Big(\bigcup_{i\in D\cap\mathbb Z^2} \overline Q_i\Big),
\end{equation}
then the set $\Sigma(u)\cap 
q(D)$
turns out to be the jump set of $u$ 
and we can write
$$\Sigma(u)=\partial \overline{\{u=1\}}\cap \partial \overline{\{u=-1\}}.$$

In the following remark we recall some definitions and classical results related to the notion of graph which will be useful to establish  properties of the connected components of $\partial\overline{\{u=1\}}$. For references on this topic, see for instance \cite{graph}.

\begin{remark}[Graphs and two-coloring]\label{verde_bis} \rm
We say that a triple $G=(V,E,r)$ is a {\em multigraph} when $V$ ({\em vertices}) and $E$ ({\em edges}) are finite sets and
$r$ ({\em endpoints}) is a map from $E$ to $V\otimes V$, where $\otimes$ denotes the symmetric product.
The {\em order} of a vertex $v$ is
$\#\{e\in E: r(e)= x\otimes v \hbox{ for some } x\in V\}+\#\{e\in E: r(e)=v\otimes v\},$ so that the loops are counted twice.
A {\em walk} in the graph $G$ is a sequence of
edges $(e_1,\dots, e_n)$ such that
there exists a sequence of vertices $(v_0,\dots, v_n)$ with the property $r(e_i)=v_{i-1}\otimes v_{i}$ for each $i$;
if moreover 
$v_n=v_0$,
then the walk is called  a {\em circuit}.
The multigraph $G$ is {\em connected} if given $v\neq v^\prime$ in $V$ there exists a walk connecting them, that is a walk such that $v_0=v$ and $v_n=v^\prime$ in the corresponding sequence of vertices.

We say that $G$ is {\em Eulerian} if there is a circuit containing every element of $E$ exactly once (Eulerian circuit). A classical theorem of Euler (see \cite[Ch.~3]{graph} and \cite{euler} for the original formulation) states that $G$ is Eulerian if and only if $G$ is connected and the order of every vertex is even.

A multigraph $G$ is {\em embedded in $\mathbb R^2$} if $V\subset\mathbb R^2$
and the edges are simple curves in $\mathbb R^2$ such that the endpoints belong to $V$ and
two edges can only intersects at the endpoints.
An embedded graph is Eulerian if and only if the union of the edges $\bigcup_{e\in E} e$ is connected, and
its complementary in $\mathbb R^2$ can be {\em two-colored}, that is
$\mathbb R^2\setminus \bigcup_{e\in E} e$
is the union of two disjoint sets $B$ and $W$ such that $\partial B=\partial W=\bigcup_{e\in E} e$.
\end{remark}

\begin{remark}[Eulerian circuits in $\partial \overline{\{u=1\}}$]\label{euler_bis}\rm
Let $C$ be a connected component of $\partial \overline{\{u=1\}}$. 
We can see $C$ as a connected embedded graph whose vertices are the points in $(\mathbb Z^2+(1/2,1/2))\cap C$ and
two vertices share an edge if there is a unit segment in $C$ connecting them.
By construction, $\mathbb R^2\setminus C$ can be {\em two-colored}, hence
$C$ is an Eulerian circuit (see Remark \ref{verde_bis}). Recalling the definition of path and the definition $(\ref{tildegamma})$, this corresponds to say that there exists a closed path $\eta$ 
such that $\tilde\eta=C$.
\end{remark}

We say that a path $\gamma\in\mathcal P_k$ {\em is in the interface} $S(u)$ if the corresponding $\tilde\gamma\subset \Sigma(u)$.

\smallskip
Let $G$ be a Lipschitz bounded domain in $\mathbb R^2$.
\begin{theorem}\label{th_rand}   Let $p<p_0$, and let $u_\eps$ be a minimizer for
$F(\cdot,\frac1\eps G)$. Then for any $\varkappa>0$ almost surely for all sufficiently small $\eps>0$
 either $\overline{\{u_\eps = 1\}}$ or $\overline{\{u_\eps = -1\}}$ is composed of connected
 components $K_i$
such that the length of the boundary of each $K_i$
is not greater than $|\log(\eps)|^{1+\varkappa}$.
\end{theorem}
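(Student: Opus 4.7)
The plan is to combine Lemma~\ref{l_proba1} with the minimality of $u_\eps$: minimality forces every connected component of the interface $\Sigma(u_\eps)$ to be a weakly separating path, Lemma~\ref{l_proba1} then forces such components to be short, and the components of the minority phase inherit short boundaries.

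Up to translation, $\frac{1}{\eps}G\subset Q_{n_\eps}$ with $n_\eps\leq C/\eps$, so $\log n_\eps=|\log\eps|+O(1)$. Choosing any $\varkappa'\in(0,\varkappa)$, Lemma~\ref{l_proba1} gives, almost surely for all $\eps$ small, the absence of any separating path in $Q_{n_\eps}$ of length $\geq(\log n_\eps)^{1+\varkappa'}$, which is below $|\log\eps|^{1+\varkappa}$ for $\eps$ small. The same counting proof, based on $\binom{k}{\lceil k/2\rceil}\leq 2^k$, yields the identical bound for the wider class of ``weakly separating'' paths with $\mu(\gamma)\geq l(\gamma)/2$.

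For any minimizer $u_\eps$, I would use Remarks~\ref{verde_bis}--\ref{euler_bis} to write each interior connected component of $\Sigma(u_\eps)$ as the curve $\tilde\eta$ of some closed path $\eta$. The remark following Definition~\ref{sep_bis} shows that if $\mu(\eta)<l(\eta)/2$, flipping $u_\eps$ on the region bounded by $\tilde\eta$ strictly lowers the energy, contradicting minimality; hence $\mu(\eta)\geq l(\eta)/2$. Applying the same flip to the region cut off by an open component of $\Sigma(u_\eps)$ together with the arc of $\partial q(\frac{1}{\eps}G)$ between its endpoints yields the same conclusion for open components. Therefore every connected component of $\Sigma(u_\eps)$ has length, and thus diameter, at most $|\log\eps|^{1+\varkappa'}\ll 1/\eps$.

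Since no curve of such small diameter can separate two points of $q(\frac{1}{\eps}G)$ at mutual distance of order $1/\eps$, the complement $q(\frac{1}{\eps}G)\setminus\Sigma(u_\eps)$ has a unique ``bulk'' connected component of diameter comparable to $1/\eps$; its phase is the majority, the other the minority. Each minority component $K_i$ is enclosed either by one closed interface component, or by an open interface component plus an arc of $\partial q(\frac{1}{\eps}G)$ whose length is controlled by the Lipschitz character of $\partial G$ in terms of the Euclidean distance between the endpoints of the open path. Each such contribution is bounded by a $G$-dependent constant times $|\log\eps|^{1+\varkappa'}$, hence by $|\log\eps|^{1+\varkappa}$ for $\eps$ small. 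The main delicate point I anticipate is that a minority component might topologically contain nested sub-islands of the majority, contributing extra closed boundary curves; since each such nested curve is itself a short separating path and their number is controlled, the total stays within $|\log\eps|^{1+\varkappa}$ after a further mild decrease of $\varkappa'$.
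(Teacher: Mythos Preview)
Your strategy coincides with the paper's: minimality plus a flip argument forces long interface pieces to be separating, then Lemma~\ref{l_proba1} rules those out. The substantive gap is in how you treat interface components reaching $\partial q(\frac1\eps G)$. You assume each such ``open'' component is a single curve with two endpoints on one boundary arc, so that a single flip along that arc suffices. This is not justified: a connected component of $\Sigma(u_\eps)$ is a planar graph whose interior dual vertices have even degree ($0$, $2$, or $4$) but whose boundary vertices may have odd degree, and there can be any even number $2m$ of the latter. For $m>1$ there is no single ``arc between its endpoints'', and even for $m=1$ the two endpoints may lie on different connected components of $\partial q(\frac1\eps G)$ when $G$ is multiply connected. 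In either case your flip is undefined and the inequality $\mu\ge l/2$ does not follow.

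The paper's Proposition~\ref{path_bis} handles exactly this difficulty. Rather than a component of $\Sigma(u)$, it takes the connected component $C$ of $\partial\overline{\{u=1\}}$ containing the given long interface path; $C$ already includes the relevant boundary arcs and is a closed Eulerian graph by Remark~\ref{euler_bis}. One then forms an auxiliary multigraph whose vertices are the components of $\partial q(D)$ and whose edges are the arcs of $C\cap\Sigma(u)$, extracts a \emph{minimal} circuit $(\tilde\eta_1,\dots,\tilde\eta_r)$ through the given arc, and splices in boundary paths $\delta_t$ to obtain a genuinely closed path $\sigma$. Two-colouring $\mathbb R^2\setminus\tilde\sigma$ and flipping one colour class gives $F(u)-F(\tilde u)=2\sum_t\bigl(l(\eta_t)-2\mu(\eta_t)\bigr)$, so some $\eta_t$ is separating; the standing hypothesis that distinct components of $\partial q(D)$ lie at distance greater than $|\log\eps|^{1+\varkappa}$ ensures each $l(\eta_t)\ge|\log\eps|^{1+\varkappa}/2$, so Lemma~\ref{l_proba1} applies. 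This multigraph/circuit construction is precisely the ingredient your sketch omits.

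Your final remark on nested sub-islands is also not quantitative: a minority component with outer boundary of length $L\le|\log\eps|^{1+\varkappa'}$ can enclose area of order $L^2$, hence total inner boundary of order $|\log\eps|^{2(1+\varkappa')}$, which does not fall below $|\log\eps|^{1+\varkappa}$ by shrinking $\varkappa'$ once $\varkappa\le 1$. The paper is equally terse at this last step.
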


\begin{proof}
We say that a path $\gamma\in\mathcal P_k$ {\em is in the interface} $S(u)$ if the corresponding $\tilde\gamma\subset \Sigma(u)$.
The proof of Theorem essentially relies on the following statement.
\begin{proposition} \label{path_bis}
For any $\Lambda>0$ a.s.~for sufficiently small $\eps>0$ and for any  open bounded subset $D\subset(-\Lambda/\eps,\Lambda/\eps)^2$ such that 
the distance between the connected components of $\partial q(D)$ is greater than $|\log(\eps)|^{1+\varkappa}$
for a minimizer $u$ of $F(\cdot,D)$
there is no path in the interface $S(u)$ of length greater than $|\log(\eps)|^{1+\varkappa}$.
\end{proposition}

\begin{proof}
Let $\gamma\in\mathcal P_k$ be a path in the interface $S(u)$ with $k\geq |\log(\eps)|^{1+\varkappa}$.
We denote by $C$ the connected component of $\partial \overline{\{u=1\}}$
containing $\tilde \gamma$. Remark \ref{euler_bis} ensures that
$C=\tilde\eta$ where $\eta$ is a closed path; hence, up to extending $\gamma$ in $\eta$,
we can assume without loss of generality that
$\gamma$ is a path of maximal length in the interface $S(u)$.

We start by showing that
there exists a closed path $\sigma=\sigma(\gamma)$ such that $\tilde\gamma\subset \tilde\sigma\subset
C$ satisfying the following property:
\begin{equation} \label{sigma_bis}
\left. \begin{array}{ll}
 \ r \hbox{ different paths } \eta_1,\dots, \eta_r \hbox{ exist such that }
&\tilde \sigma \cap \Sigma(u)
\vspace{1.5mm}=\bigcup_{t}\tilde\eta_t \\
&l(\eta_t)\geq |\log(\eps)|^{1+\varkappa}/2  \hbox{ for all }t.
\end{array}
\right.
\end{equation}
If $\gamma$ is closed, then we set $\sigma=\gamma=\eta_1$.
Otherwise, $\tilde\gamma$ connects two points in $\partial q(D)$.

If these endpoints belong to the same connected
component 
of $\partial q(D)$, then we can choose a path $\delta$ such that $\tilde \delta$ lies in $\partial q(D)$ and has the same endpoints of $\tilde\gamma$ and, recalling the notion of concatenation of paths, we can define $\sigma$ as $\gamma\ast\delta$, and again $\gamma=\eta_1$.

It remains to construct $\sigma$ when the endpoints of $\tilde\gamma$
belong to different connected components of $\partial q(D)$.
We consider the set $V$ of the connected components of $\partial q(D)$ and the set $E$
of the connected components of $\tilde\eta\cap\Sigma(u)$ (note that $\tilde\gamma\in E$).
By the existence of the path $\eta$,
each element of $E$ is a curve connecting
two (possibly equal) elements of $V$, then $(V,E)$ is a multigraph.
Since $\tilde\eta$ is a closed curve containing $\tilde\gamma$, it realizes in the graph an Eulerian
circuit containing $\tilde\gamma$. Therefore, there exists a minimal Eulerian circuit
$(\tilde\gamma=\tilde\eta_1,\tilde\eta_2,\dots, \tilde\eta_r)$ and,
by minimality, the order of each vertex touched by this circuit is $2$ (see Remark \ref{verde_bis}).
Denoting by $\Delta_t$ the vertex shared by $\tilde\eta_t$ and $\tilde\eta_{t+1}$ for $t<r$, and
by $\Delta_r$ the vertex shared by $\tilde\eta_r$ and $\tilde\eta_1$, for each $t$
we can find a path $\delta_t$ such that $\tilde\delta_t\subset\Delta_{t}$ and such that
the path $\sigma=\delta_1\ast\eta_1\ast\dots\ast\delta_r\ast\eta_r$ is closed and satisfies
the property (\ref{sigma_bis}).

\smallskip

Since $\sigma$ is a closed path, then $\tilde\sigma$ is a closed properly self-intersecting curve so that
all the vertices of the corresponding embedded graph have even order.
Remark \ref{verde_bis} ensures that the embedded graph corresponding to $\tilde\sigma$ is Eulerian, hence
its complementary $\mathbb R^2\setminus \tilde\sigma$ can be two-colored, that is it is the union of two disjoint sets $B$ and $W$ such that $\partial B=\partial W=\tilde\sigma$. Setting $\tilde u$ as the extension to $\cup_{i\in D\cap\mathbb Z^2} Q_i$ of the function
$$
\tilde u_i=\left\{ \begin{array}{ll}
u_i & \hbox{ in } B\cap D\cap\mathbb Z^2\\
-u_i & \hbox{ in } W\cap D\cap\mathbb Z^2
\end{array}
\right.
$$
it follows that
$F(u,D)-F(\tilde u,D)=2\sum_{t=1}^n(l(\eta_t)-2\mu(\eta_t))$, where $\mu(\eta_t)$ stands for the number of antiferromagnetic interactions in $\eta_t$ as defined in (\ref{mu}).
Since $u$ minimizes $F(\cdot, D)$, we can conclude that, for at least one index $t$, $\mu(\sigma_t)\geq l(\sigma_t)/2$; that is, $\sigma_t$ is a separating path of length greater than $|\log(\eps)|^{1+\varkappa}$, contradicting Lemma \ref{l_proba1} and concluding the proof.
\end{proof}

We turn to the proof of Theorem \ref{th_rand}. Letting $G_\e=q(\frac{1}{\e}G),$ we
consider the connected components 
of the interface $\Sigma(u_\e)$.
Since each of them 
corresponds to a path, they are either closed curves, denoted by $C_\e^i$ for $i=1,\dots, n$,
or curves with the endpoints in $\partial G_\e$, denoted by $D_\e^j$ for $j=1,\dots, m$.
Since for $\e$ small enough the distance between two connected components of $\partial G_\e$ is greater than $|\log(\e)|^{1+\varkappa}$, Proposition \ref{path_bis} ensures that in both cases the length of such curves is less than $|\log(\e)|^{1+\varkappa}$.

The distance between the endpoints of a component $D_\e^j$
is less than $|\log(\e)|^{1+\varkappa}$, and, since $G$ is Lipschitz,
for $\e$ small enough we can find a path in $\partial G_\e$ with the same endpoints and length less than $\tilde C |\log(\e)|^{1+\varkappa}$.
This gives a closed path $S_\e^j$ with length less than $\tilde C |\log(\e)|^{1+\varkappa}$ containing $D_\e^j$.

The set $\mathbb R^2\setminus \left(\bigcup_{i}C_\e^i\cup\bigcup_jD_\e^j\right)$ has exactly one
unbounded connected component, which we call $P_\e$.
The function $u_\e$ is constant in $P_\e\cap G_\e$. Assuming that this constant value is $1$,
then $\partial\overline{\{u_\e=-1\}}$ is contained in $\left(\bigcup_{i}C_\e^i\cup\bigcup_jD_\e^j\right)$ and
the boundary of every connected component $K_\e^i$
of $\overline{\{u_\e=-1\}}$ has length less than $C|\log(\e)|^{1+\varkappa}$.
\end{proof}

\section{Periodic media}
We now turn our attention to a deterministic analog of the problem discussed above, where random
coefficients are substituted by periodic coefficients and the probability of having antiferromagnetic interactions is
replaced by their percentage.

\subsection{Estimates on the number of antiferromagnetic interactions along a path}
In order to prove a deterministic analogue of Theorem \ref{th_rand}, we need to give an estimate of the length of separating paths corresponding to the result stated in Lemma \ref{l_proba1}.
We start with the definition of a periodic spin system in the deterministic case given on the lines of Definition \ref{def-random}.
\begin{definition}[periodic spin system]With fixed $N\in\mathbb N$, a deterministic (ferromagnetic/antiferromagnetic)
{\em spin system} is a function $c(\{i,j\})=c_{ij}\in\{\pm1\}$ defined on
$\mathcal N$. The pairs $\{i,j\}$ with $c_{ij}=+1$ are called {\em ferromagnetic
bonds}, the pairs $\{i,j\}$ with $c_{ij}=-1$ are called {\em antiferromagnetic
bonds}.
We say that a spin system is {\em $N$-periodic} if
$$c(\{i,j\})=c(\{i+(N,0),j+(N,0)\})=c(\{i+(0,N),j+(0,N)\}).$$
\end{definition}

In the sequel of this section, when there is no ambiguity we use the same terminology and notation concerning the random case given in Section 2.

\begin{definition}[spin systems with given antiferro proportion]
For $p\in (0,1)$ we consider the set $\mathcal C_p(N)$ of $N$-periodic spin systems $\{c_{ij}\}$ such that
the number of antiferromagnetic interactions in $[0,N]^2$ is $\lfloor 2pN^2\rfloor$, and for any
$\lambda\in(0,1)$ we define
\begin{equation}\label{def-conf-cattive}
\mathcal B^\lambda_p(N)=\left\{\{c_{ij}\}\in \mathcal C_p(N): \exists \ \gamma\in \mathcal P_k(N)
\hbox{ separating path for $\{c_{ij}\}$ with $k\geq \lambda N$}\right\}
\end{equation}
where
$\mathcal P_k(N)$ is the set of paths $\gamma=(i_0,\dots,i_k)\in \mathcal P_k$ such that
$i_s\in [0,N]^2$ for each $s.$
\end{definition}

\begin{proposition}
There exists $p_0>0$ such that for every $p<p_0$ and $\lambda>0$:
$$\lim_{N\to +\infty}
\frac{\# \mathcal B^\lambda_p(N)}{\# \mathcal C_p(N)}=0.$$
\end{proposition}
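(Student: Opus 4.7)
The plan is to adapt the path-counting argument of Lemma \ref{l_proba1} to the combinatorial setting, replacing probabilities by proportions of configurations in $\mathcal C_p(N)$. By a union bound,
$$\frac{\#\mathcal B^\lambda_p(N)}{\#\mathcal C_p(N)}\ \le\ \sum_{k\ge\lambda N}\ \sum_{\gamma\in\mathcal P_k(N)}\frac{\#\{\{c_{ij}\}\in\mathcal C_p(N):\mu(\gamma)>k/2\}}{\#\mathcal C_p(N)},$$
so I only need to bound the inner ratio and the number of paths, and then verify that the resulting double sum is geometric in $k$ with a ratio strictly less than $1$ once $p$ is small enough.

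For the inner ratio, note that $\#\mathcal C_p(N)=\binom{2N^2}{m}$ with $m=\lfloor 2pN^2\rfloor$, since a fundamental $N$-periodic domain contains $2N^2$ bonds. Fix $\gamma\in\mathcal P_k(N)$: its $k$ dual bonds, reduced modulo $N\mathbb Z^2$, identify at most $k$ bonds in the fundamental domain (and exactly $k$ when $k=o(N^2)$). For any prescribed subfamily of $j$ of these bonds, the number of $c\in\mathcal C_p(N)$ making them all antiferromagnetic is at most $\binom{2N^2-j}{m-j}$, and a direct ratio estimate gives
$$\frac{\binom{2N^2-j}{m-j}}{\binom{2N^2}{m}}\ =\ \frac{m(m-1)\cdots(m-j+1)}{2N^2(2N^2-1)\cdots(2N^2-j+1)}\ \le\ p^j(1+o(1)).$$
Summing over the $\binom{k}{j}$ possible subfamilies and over $j>k/2$ yields the deterministic analogue of \eqref{prob_est1}:
$$\frac{\#\{\{c_{ij}\}\in\mathcal C_p(N):\mu(\gamma)>k/2\}}{\#\mathcal C_p(N)}\ \le\ 2^k\,p^{k/2}(1+o(1)).$$

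For the outer sum, the number of paths in $\mathcal P_k(N)$ is at most $(N+1)^2\cdot 4\cdot 3^{k-1}$, since there are $(N+1)^2$ starting nodes in $[0,N]^2\cap\mathbb Z^2$ and at each subsequent step the path has at most three admissible extensions (the edge just traversed being forbidden). Combining the two estimates,
$$\frac{\#\mathcal B^\lambda_p(N)}{\#\mathcal C_p(N)}\ \le\ C\,N^2\sum_{k\ge\lambda N}(6\sqrt p)^k,$$
and choosing $p_0=(1/12)^2$ as in Lemma \ref{l_proba1} so that $6\sqrt p<1/2$ whenever $p<p_0$, the right-hand side is dominated by $C'N^2\cdot 2^{-\lambda N}$, which tends to $0$ as $N\to+\infty$.

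The main technical obstacle I expect is making the hypergeometric-to-binomial comparison uniform in $k$: the clean ratio estimate above is immediate for $k=o(N^2)$, but for $k$ of order $N^2$ one must additionally control boundary effects and the possible collision of equivalence classes of bonds. Fortunately in that regime the factor $(6\sqrt p)^k$ is already super-exponentially small in $N$, so even rather crude bounds on the degradation of the hypergeometric-binomial comparison are absorbed without affecting the final limit.
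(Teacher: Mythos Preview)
Your approach is correct and follows the same union-bound/path-counting strategy as the paper, but your treatment of the hypergeometric ratio is considerably more elementary. The paper writes the exact count
\[
f_p(k,N)=\sum_{j> k/2}\binom{k}{j}\binom{2N^2-k}{2p_NN^2-j},
\]
passes to the dual form $\binom{2p_NN^2}{j}\binom{2(1-p_N)N^2}{k-j}\binom{2N^2}{k}^{-1}$, and then proves a separate lemma bounding the central term by $C(p)N^8\bigl(2e\sqrt{p(1-p)}\bigr)^k$ via Stirling-type inequalities; the final sum over $k$ is then organised into dyadic blocks. Your direct product bound
\[
\frac{\binom{2N^2-j}{m-j}}{\binom{2N^2}{m}}=\prod_{i=0}^{j-1}\frac{m-i}{2N^2-i}\le\Bigl(\frac{m}{2N^2}\Bigr)^{j}=p_N^{\,j}\le p^{\,j}
\]
bypasses all of this. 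Note in particular that this inequality holds \emph{exactly} for every $j\le m$, with no $(1+o(1))$ correction: each factor satisfies $(m-i)/(2N^2-i)\le m/(2N^2)$ simply because $m\le 2N^2$. Hence the ``main technical obstacle'' you anticipate does not arise; the geometric bound $\sum_{k\ge\lambda N}(6\sqrt p)^k$ is uniform in $k$, and your conclusion $CN^2\cdot 2^{-\lambda N}\to 0$ needs no further work. As a side effect your threshold $p_0=1/144$ is slightly better than the paper's condition $6e\sqrt{p(1-p)}<1$.
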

\begin{proof}
We fix a path $\gamma\in\mathcal P_k(N)$ with $\lambda N\leq k\leq 4p_NN^2$, where
$p_N=\frac{\lfloor 2pN^2\rfloor}{2N^2}$.
Then, the number of spin systems $\{c_{ij}\}$ in $\mathcal C_p(N)$
for which $\gamma$ is a separating path depends only on $k$ and it is given by
$$f_{p}(k,N)=\sum_{j=k/2}^{\min\{k,2p_N N^2\}}
\begin{pmatrix}k\\j\end{pmatrix}
\begin{pmatrix}2N^2-k\\2p_N N^2-j\end{pmatrix}.$$
Since
\begin{eqnarray*}
\# \mathcal B^\lambda_p(N)&\leq&\sum_{k=\lfloor\lambda N\rfloor}^{4p_N N^2}\# \{
\mathcal P_k(N)\}f_{p}(k,N) \ \leq \ \sum_{k=\lfloor\lambda N\rfloor}^{4p_N N^2}3^k N^2 f_{p}(k,N)\\
\# \mathcal C_p(N)&=&\begin{pmatrix}2N^2\\
2p_N N^2\end{pmatrix}
\end{eqnarray*}
we get the estimate
$$\frac{\# \mathcal B^\lambda_p(N)}{\# \mathcal C_p(N)}\leq \sum_{m=m(\lambda)}^{m(N)}\left(\sum_{k=2^mN}^{2^{m+1}N}3^k N^2 f_{p}(k,N) \begin{pmatrix}2N^2\\
2p_N N^2\end{pmatrix}^{-1}\right)$$
where $m(\lambda)=\lfloor\log_2(\lambda)\rfloor-1$ and $m(N)=\lfloor \log_2 (4p_N N)\rfloor -1$.
Noting that
$$\begin{pmatrix}2p_N N^2\\j\end{pmatrix}
\begin{pmatrix}2(1-p_N) N^2\\k-j\end{pmatrix}
\leq \begin{pmatrix}2p_N N^2\\k/2\end{pmatrix}
\begin{pmatrix}2(1-p_N) N^2\\k/2\end{pmatrix}
$$
for each $j=k/2, \ldots, \min\{k,2p_N N^2\}$,
we get
\begin{equation}\label{effe}
\left. \begin{array}{ll}
\disp f_{p}(k,N)\begin{pmatrix}2N^2\\
2p_N N^2\end{pmatrix}^{-1}&=\disp\sum_{j=k/2}^{\min\{k,2p_N N^2\}}
\begin{pmatrix}k\\j\end{pmatrix}
\begin{pmatrix}2N^2-k\\2p_N N^2-j\end{pmatrix}\begin{pmatrix}2N^2\\
2p_N N^2\end{pmatrix}^{-1}\\
&\disp=\sum_{j=k/2}^{\min\{k,2p_N N^2\}}
\begin{pmatrix}2p_N N^2\\j\end{pmatrix}
\begin{pmatrix}2(1-p_N) N^2\\k-j\end{pmatrix}
\begin{pmatrix}2N^2\\k\end{pmatrix}^{-1}\\
&\disp\leq\Bigl(\min\{k,2p_N N^2\}-\frac{k}{2}\Bigr)
g_{p}(k,N)
\end{array}
\right.
\end{equation}
where
$$g_{p}(k,N)=\begin{pmatrix}2p_N N^2\\k/2\end{pmatrix}
\begin{pmatrix}2(1-p_N) N^2\\k/2\end{pmatrix}
\begin{pmatrix}2N^2\\k\end{pmatrix}^{-1}.
$$
Now, we prove an estimate for $g_{p}(k,N)$.
\begin{lemma}\label{g}
For any $k,N\in\mathbb N$ such that $k\leq 4pN^2$ and for any $p\in(0,1/2)$ we have
$$g_{p}(k,N)\leq C(p)N^8(\theta(p))^{k}$$
with $\theta(p)=2 e \sqrt{p(1-p)}$.
\end{lemma}
\begin{proof}[Proof of Lemma {\rm\ref{g}}]
Recalling that $n^ne^{1-n}\leq n! \leq n^{n+1} e^{1-n}$ for any $n$ in $\mathbb N$, we get
\begin{equation*}
\begin{pmatrix}2N^2\\k\end{pmatrix}^{-1}=
\frac{k!(2N^2-k)!}{(2N^2)!}
\leq e k (2N^2-k)\left(\frac{k}{2N^2-k}\right)^k
\left(\frac{2N^2-k}{2N^2}\right)^{2N^2}
\end{equation*}
and for $k\neq 4p_NN^2$
\begin{eqnarray*}
\begin{pmatrix}2p_NN^2\\k/2\end{pmatrix}&=&
\frac{(2p_NN^2)!}{(k/2)! (2p_NN^2-k/2)!}\\
&\leq& \frac{2p_NN^2}{e}
\left(2p_NN^2-\frac{k}{2}\right)^{k/2}\left(\frac{k}{2}\right)^{-k/2}
\left(\frac{2p_NN^2}{2p_NN^2-k/2}\right)^{2p_NN^2}.
\end{eqnarray*}
Hence, the following estimate holds
\begin{eqnarray*}
g_{p}(k,N)&\leq&
\frac{4p_N(1-p_N)}{e}N^4 (2N^2-k) k
\left(2 \frac{(2p_NN^2-\frac{k}{2})^{1/2} (2(1-p_N)N^2-\frac{k}{2})^{1/2}}{2(1-p_N)N^2}\right)^{k} \\
&&\left(\frac{2N^2-k}{2N^2}\right)^{2N^2}
\left(\frac{2p_NN^2}{2p_NN^2-k/2}\right)^{2p_NN^2}
\left(\frac{2(1-p_N)N^2}{2(1-p_N)N^2-k/2}\right)^{2(1-p_N)N^2}.
\end{eqnarray*}
Recalling the inequalities
$$\left(\frac{x-a}{x}\right)^{a} \ e^{-a}\leq \left(\frac{x-a}{x}\right)^x \leq  \left(\frac{x-a}{x}\right)^{a}$$
for $a,x$ such that $0<a<x$, we get
for any $k\neq 4p_NN^2$
\begin{eqnarray*}
&\disp\left(\frac{2N^2-k}{2N^2}\right)^{2N^2}&\leq \hspace{2mm}
\left(\frac{2N^2-k}{2N^2}\right)^{k} \\
&\disp\left(\frac{2p_N N^2}{2p_N N^2-k/2}\right)^{2p_N N^2}&\leq \hspace{2mm}
\left(\frac{2p_N N^2}{2p_N N^2-k/2}\right)^{k/2} e^{k/2}\\
&\disp\left(\frac{2(1-p_N) N^2}{2(1-p_N) N^2-k/2}\right)^{2(1-p_N) N^2}&\leq \hspace{2mm}
\left(\frac{2(1-p_N) N^2}{2(1-p_N) N^2-k/2}\right)^{k/2} e^{k/2}.
\end{eqnarray*}
Since $p_N(1-p_N)\leq p(1-p)$ for $p\in(0,1/2)$, the previous estimates give
\begin{eqnarray*}
g_{p}(k,N)&\leq& 
\frac{4p(1-p)}{e}N^4 (2N^2-k) k \left(2e\sqrt{p_N(1-p_N)}\right)^{k}\\
&\leq& \frac{32p^2(1-p)}{e}N^8 \left(2e\sqrt{p_N(1-p_N)}\right)^{k}\\
&\leq& \frac{32p^2(1-p)}{e}N^8 \left(2e\sqrt{p(1-p)}\right)^{k}
\end{eqnarray*}
concluding the proof for $k\neq 4p_NN^2$. Note that
$\theta(p)= 2 e  \sqrt{p(1-p)} \to 0$ for $p\to 0$.

It remains to check the case $k=4p_NN^2$.  Noting that for $p<1/2$
\begin{eqnarray*}
g_{p}(4p_N N^2,N)&=&\frac{(4p_NN^2)! (2(1-p_N)N^2)! }{(2p_NN^2)!(2N^2)!}\\
&\leq& 8p_N(1-p_N) N^4 (1-p_N)^{2N^2}\left(\frac{2p_N}{\sqrt{p_N(1-p_N)}}\right)^{4p_NN^2}\\
&\leq& 8p(1-p) N^4 \left(2e\sqrt{p(1-p)}\right)^{4p_NN^2}.
\end{eqnarray*}
the thesis of Lemma \ref{g} follows.
\end{proof}
Now, Lemma \ref{g} allows to conclude the proof of the proposition. Indeed,
applying the estimate on $g_p(k,N)$, we get from inequality (\ref{effe})
%
\begin{eqnarray*}\sum_{k=2^mN}^{2^{m+1}N}3^k N^2 f_{p}(k,N) \begin{pmatrix}2N^2\\
2p_NN^2\end{pmatrix}^{-1}
&\leq& pC(p)N^{12}\sum_{k=2^mN}^{2^{m+1}N}(3\theta(p))^{k}\\
&=&pC(p)N^{12}\sum_{t=2^m}^{2^{m+1}}((3\theta(p))^N)^{t}\\
&=&pC(p)N^{12}((3\theta(p))^N)^{2^m} \frac{1-  (3\theta(p))^{(2^m+1)N}}{1-(3\theta(p))^{N}}\\
&\leq& C(p)N^{12}
(3\theta(p))^{2^mN}
\end{eqnarray*}
for $p<1/2$ and for $N$ large enough (independent on $m$).

By summing over $m$, we get
\begin{equation}\label{stima}
\left. \begin{array}{ll}
\disp\frac{\# \mathcal B^\lambda_p(N)}{\# \mathcal C_p(N)}&\disp\leq C(p)N^{12}
\sum_{m=m(\lambda)}^{m(N)} (3\theta(p))^{2^mN}
\\
&\disp\leq C(p)N^{12}
(3\theta(p)^N)^{2^{m(\lambda)}-1}
\sum_{m=m(\lambda)}^{m(N)} (3\theta(p)^N)^{2^m-2^{m(\lambda)}+1} \\
&\disp\leq  C(p)N^{12}(3\theta(p))^{(2^{m(\lambda)}-1)N}\sum_{t=1}^{+\infty} ((3\theta(p))^N)^t\\
&\disp\leq  C(p)N^{12}\frac{(3\theta(p))^{2^{m(\lambda)}N}}{1-(3\theta(p))^{N}}\\
&\disp\leq  2C(p)N^{12}((3\theta(p))^{2^{m(\lambda)}})^N
\end{array}
\right.
\end{equation}
which goes to $0$ as $N\to +\infty$ if
$3\theta(p)=6 e \sqrt{p(1-p)}<1$.
\end{proof}

\begin{remark}[Translations]\label{trans}\rm
Denoting by $\tilde{\mathcal B}_p^\lambda(N)$ the set of $N$-periodic spin systems $\{c_{ij}\}$ such that
there exists a separating path for $\{c_{ij}\}$ in $z+[0,N]^2$ for some $z\in \mathbb Z^2$, then the estimate (\ref{stima})
implies
$$\lim_{N\to +\infty}\frac{\#\tilde{\mathcal B}_p^\lambda(N)}{\#{\mathcal C_p}(N)}=0.$$
\end{remark}

Now, we state the deterministic analogue of Lemma \ref{l_proba1}.
\begin{proposition}\label{globale}
If the $N$-periodic spin system $\{c_{ij}\}$ belongs to $\mathcal C_p\setminus \tilde{\mathcal B}_p^{1/2}(N)$,
then there is no separating path in $\mathbb Z^2$ of length greater than $N/2$.
\end{proposition}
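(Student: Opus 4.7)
The plan is to argue by contradiction. Suppose that a separating path $\gamma=(i_0,\ldots,i_k)$ in $\mathbb Z^2$ exists with length $k>N/2$; the strategy is to extract from $\gamma$ a sub-path $\delta$ which (i) is itself separating, i.e.\ $\mu(\delta)>l(\delta)/2$, and (ii) has length $l(\delta)\in[\lceil N/2\rceil,\,N]$. Once such $\delta$ is found, its $\ell^\infty$-diameter is at most $l(\delta)\leq N$, so $\tilde\delta$ fits inside some integer translate $z+[0,N]^2$, and thus $\delta$ certifies that $\{c_{ij}\}\in\tilde{\mathcal B}_p^{1/2}(N)$, contradicting the hypothesis.

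To produce $\delta$, first dispose of the easy case $k\leq N$ by setting $\delta=\gamma$: it has length $>N/2$ and already fits in a translate of $[0,N]^2$. Otherwise $k>N$. Set $L=\lceil N/2\rceil$ and write $k=qL+r$ with $0\leq r<L$ and $q\geq 2$. Decompose $\gamma$ by concatenation into $q-1$ consecutive sub-paths of length exactly $L$ followed by a final piece of length $L+r$; absorbing the short remainder into the last block this way guarantees that every piece has length in
\[
[L,\,2L-1]\subseteq\bigl[\lceil N/2\rceil,\,N\bigr],
\]
the upper bound holding because $2\lceil N/2\rceil-1\leq N$ in both parities. By the additivity of both $l$ and $\mu$ under concatenation and the strict inequality $\mu(\gamma)>k/2$,
\[
\sum_{j}\Bigl(\mu(\gamma_j)-\tfrac{l(\gamma_j)}{2}\Bigr)=\mu(\gamma)-\tfrac{k}{2}>0,
\]
so by pigeonhole at least one piece $\delta=\gamma_j$ satisfies $\mu(\delta)>l(\delta)/2$, which is the desired separating sub-path.

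I do not expect any serious obstacle. The only point requiring care is the parity bookkeeping: using $\lceil N/2\rceil$ rather than $\lfloor N/2\rfloor$ as block size is what makes every piece have length $\geq N/2$, and merging the short tail with its predecessor keeps the combined length $\leq N$ so that each piece actually fits inside a translated copy of $[0,N]^2$. Beyond that, the heart of the argument is just pigeonhole applied to the strict separating inequality combined with the trivial observation that a path of length at most $N$ in $\mathbb Z^2$ lies in some translate of $[0,N]^2$.
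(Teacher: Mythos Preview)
Your proof is correct and follows essentially the same route as the paper: decompose the long path into consecutive sub-paths of length between roughly $N/2$ and $N$, observe that each such piece fits inside an integer translate of $[0,N]^2$, and use the hypothesis $\{c_{ij}\}\notin\tilde{\mathcal B}_p^{1/2}(N)$ to preclude any piece from being separating. The only cosmetic difference is that the paper argues directly (no piece is separating, hence $\mu(\gamma)\le k/2$) while you phrase it contrapositively via pigeonhole; your block length $L=\lceil N/2\rceil$ with the tail merged into the last block is a clean way to handle both parities at once.
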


\begin{figure}[h!]
\centerline{\includegraphics [width=5.5in]{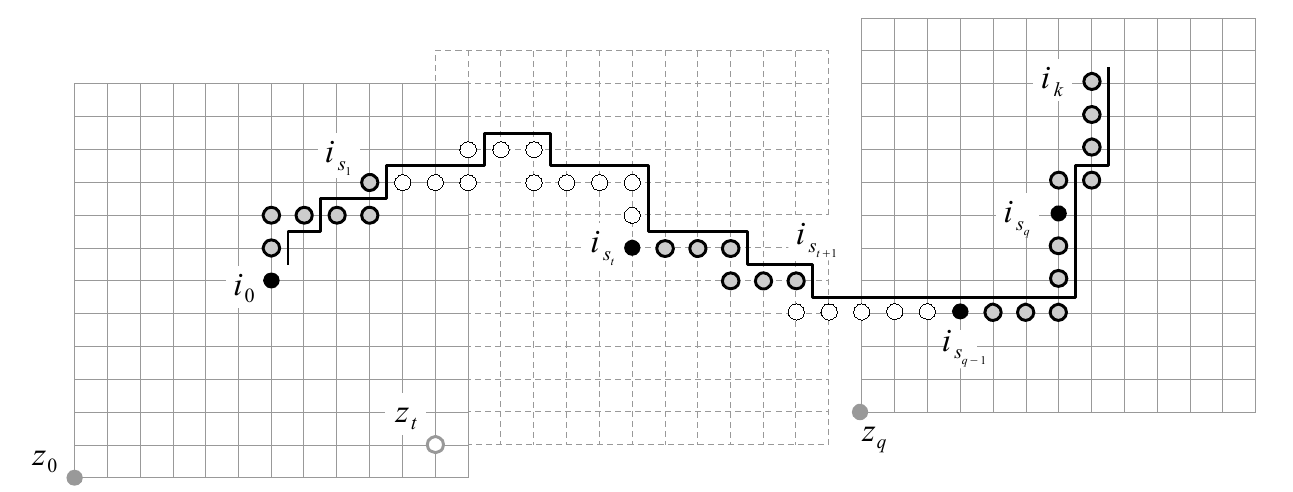}}
\caption{decomposition of $\gamma$}\label{FIG3}
   \end{figure}
\begin{proof}
Let $\gamma=(i_0,\ldots, i_k)$ be a path in $\mathcal P_k$ with $k\geq N/2$.
We decompose $\gamma$ as a concatenation of paths $\gamma_1\ast\dots\ast\gamma_{q-1}\ast\gamma_q$ with
$l(\gamma_t)\geq N/2$ and each $\gamma_t$ contained in a coordinate square
$z+[0,N]^2$ for some $z\in\mathbb Z^2$ (see Fig.~\ref{FIG3}).

If $N$ is even, setting  $q=\lfloor \frac{2k}{N}\rfloor$ and $s_t=tN/2$ for $t=0,\dots,q$, we define
\begin{equation}\label{gamma-t}
\gamma_t=(i_{s_{t-1}},\dots, i_{s_t})
\hbox{ \ \ for }  t=1,\dots,q-1 \hbox{ \ \ and \ \ }
\gamma_{q}=(i_{s_{q-1}},\ldots, i_k). 
\end{equation}
In this way, setting
\begin{eqnarray*}
&&z_t=i_{s_{t-1}}-\left(\frac{N}{2},\frac{N}{2}\right)
\hbox{ \ \ for }  t=1,\dots,q-1\\
&&z_{q}=i_{s_{q}}-\left(\frac{N}{2},\frac{N}{2}\right),
\end{eqnarray*}
it follows that for any $t=1,\dots q$ $\gamma_t$ is a path of length $l(\gamma_t)$ greater than $ N/2$ contained in
$z_t+[0,N]^2$.
Since $\{c_{ij}\}\not\in\tilde{\mathcal B}_p^{1/2}(N)$, the number of antiferromagnetic interactions
$\mu(\gamma_t)$ is less than $l(\gamma_t)/2$ for any $t$. Hence $\mu(\gamma)\leq k/2$.

If $N$ is odd, we pose $q=\lfloor \frac{2k}{N+1}\rfloor$ and $s_t=t(N+1)/2$ for $t=0,\dots,q$;
defining the adjacent paths $\gamma_t$ as in (\ref{gamma-t}),
 by setting
\begin{eqnarray*}
&&z_t=i_{s_{t-1}}-\left(\frac{N-1}{2},\frac{N-1}{2}\right)
\hbox{ \ \ for }  t=1,\dots,q-1\\
&&z_{q}=i_{s_q-1}-\left(\frac{N-1}{2},\frac{N-1}{2}\right).
\end{eqnarray*}
the result follows as in the previous case.
\end{proof}

\subsection{Geometry of minimizers}
We conclude by stating the results concerning the geometry of the ground states, corresponding to Proposition \ref{path_bis} and Theorem \ref{th_rand} respectively.
The main result states that for
spin systems not in ${\mathcal B}^{1/2}_p$ the minimizers of $F$ on large sets
are characterized by a majority phase. Remark \ref{trans} then assures that
this is a generic situation for $N$ large.

\begin{theorem} Let $N\in{\mathbb N}$, and let $\{c_{ij}\}$ be a $N$-periodic
distribution of ferro/antiferro\-magnetic interactions
such that $\{c_{ij}\}\not\in\tilde{\mathcal B}^{1/2}_p$.
Let $G$ be a Lipschitz bounded open set and let
$u_\e$ be a minimizer for $F(\cdot,{1\over\e}G)$.
Then there exists a constant $C$ depending only on $G$
such that either $\overline{\{u_\e=1\}}$ or $\overline{\{u_\e=-1\}}$ is composed of
connected components $K_\e^i$ such that
the length of the boundary of each $K_\e^i$ is not greater than $CN$.
\end{theorem}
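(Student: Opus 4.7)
The plan is to mirror the proof of Theorem \ref{th_rand} step by step, replacing the probabilistic input of Lemma \ref{l_proba1} by the deterministic bound of Proposition \ref{globale}, and tracking constants so that the final estimate depends linearly on $N$ rather than on $|\log(\e)|$. The key observation is that Proposition \ref{globale} plays exactly the role of Lemma \ref{l_proba1}: under the hypothesis $\{c_{ij}\}\not\in\tilde{\mathcal B}^{1/2}_p$, no separating path in $\ZZ^2$ can have length greater than $N/2$.

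First I would establish a deterministic analogue of Proposition \ref{path_bis}: if $D\subset\RR^2$ is a bounded open set such that the distance between distinct connected components of $\partial q(D)$ exceeds $N$, and $u$ is a minimizer of $F(\cdot,D)$, then no path in the interface $S(u)$ has length greater than $N/2$. The proof repeats the Eulerian/two-coloring scheme of Proposition \ref{path_bis} verbatim: given a maximal path $\gamma$ in $S(u)$ of length greater than $N/2$, I construct a closed path $\sigma$ with $\tilde\gamma\subset\tilde\sigma\subset\partial\overline{\{u=1\}}$ either by using $\gamma$ itself (closed case), by closing $\gamma$ with a boundary arc (endpoints on the same component of $\partial q(D)$), or by walking a minimal Eulerian circuit through the multigraph of boundary components and interface pieces (endpoints on distinct components). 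The separation assumption ensures that each interface piece $\eta_t$ in the resulting decomposition has length greater than $N/2$. Two-coloring $\RR^2\setminus\tilde\sigma$ and flipping $u$ on one region yields a competitor $\tilde u$ with
\begin{equation*}
F(u,D)-F(\tilde u,D)=2\sum_t\bigl(l(\eta_t)-2\mu(\eta_t)\bigr),
\end{equation*}
so minimality forces some $\eta_t$ to be a separating path of length greater than $N/2$, contradicting Proposition \ref{globale}.

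Next I would apply this analogue to $D_\e=q(\frac{1}{\e}G)$. Since $G$ is Lipschitz and bounded, there exists $\e_0=\e_0(G,N)$ such that for all $\e<\e_0$ the connected components of $\partial q(\frac{1}{\e}G)$ are pairwise separated by more than $N$, so every connected component of $\Sigma(u_\e)$ has length at most $N/2$; each such component is either a closed curve $C_\e^i$ or an arc $D_\e^j$ with both endpoints on $\partial q(\frac{1}{\e}G)$. For each $D_\e^j$ the two endpoints lie in a single boundary component, since their distance is at most $N/2$, so the Lipschitz property of $\partial G$ supplies a connecting boundary arc of length at most $\tilde C(G)\,N$. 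Closing each $D_\e^j$ with such an arc produces closed curves $S_\e^j$ of length at most $CN$ with $C=C(G)$; the unbounded component of the complement of $\bigcup_i C_\e^i\cup\bigcup_j D_\e^j$ is the majority phase, say $u_\e=1$, and every connected component $K_\e^i$ of $\overline{\{u_\e=-1\}}$ is enclosed by one of the $C_\e^i$ or $S_\e^j$, hence has boundary length at most $CN$.

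The main obstacle is the Eulerian-circuit construction in case (iii) of the analogue of Proposition \ref{path_bis}, where the bookkeeping guaranteeing that each $\eta_t$ is long enough to invoke Proposition \ref{globale} relies on a separation condition between boundary components that scales with $N$; this forces $\e_0$ to depend on $N$, but the boundedness and Lipschitz regularity of $G$ make this harmless. Once this analogue is in place, the remainder of the argument is a purely topological decomposition identical to the closing paragraph of the proof of Theorem \ref{th_rand}.
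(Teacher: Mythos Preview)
Your proposal is correct and follows essentially the same approach as the paper: the paper explicitly states that the proof is identical to that of Theorem \ref{th_rand}, with Proposition \ref{globale} replacing Lemma \ref{l_proba1}, and it formalizes your ``deterministic analogue of Proposition \ref{path_bis}'' as a separate Proposition (with separation threshold $N/2$ rather than your $N$, a harmless discrepancy). Your remark that $\e_0$ must depend on $N$ is a useful clarification that the paper leaves implicit.
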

As for Theorem \ref{th_rand}, the proof relies on the estimate of the length of paths in the interface, which in this case reads as follows.
\begin{proposition} \label{path}
Let $N\in{\mathbb N}$, and let $\{c_{ij}\}$ be a $N$-periodic
distribution of ferro/anti\-ferro\-magnetic interactions
such that $\{c_{ij}\}\not\in\tilde{\mathcal B}^{1/2}_p$.
Let $D$ be an open bounded subset of $\mathbb R^2$ such that 
the distance between the connected components of $\partial q(D)$ is greater than $N/2$.
Let $u$ be a minimizer for $F(\cdot,D)$.
Then there is no path in the interface $S(u)$ of length greater than $N/2$.
\end{proposition}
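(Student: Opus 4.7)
The plan is to mirror the proof of Proposition \ref{path_bis} line by line, using Proposition \ref{globale} in place of the probabilistic Lemma \ref{l_proba1}. I argue by contradiction: suppose there is a path $\gamma\in\mathcal P_k$ in $S(u)$ with $k>N/2$. By Remark \ref{euler_bis} the connected component of $\partial\overline{\{u=1\}}$ containing $\tilde\gamma$ is of the form $\tilde\eta$ for some closed path $\eta$, so after extending $\gamma$ inside $\eta$ I may assume that $\gamma$ has maximal length in $S(u)$.

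The next step is to construct a closed path $\sigma$ with $\tilde\gamma\subset\tilde\sigma\subset\tilde\eta$, together with a decomposition $\tilde\sigma\cap\Sigma(u)=\bigcup_{t=1}^r\tilde\eta_t$ into interface paths, exactly as in Proposition \ref{path_bis}. The three cases are the same (closed $\gamma$; endpoints of $\tilde\gamma$ in the same component of $\partial q(D)$; endpoints in different components), and in each case I arrange that at least one $\eta_t$ has length strictly greater than $N/2$: in the first two cases one takes $\eta_1=\gamma$, while in the third case the minimal Eulerian-circuit decomposition produces some $\eta_t$ joining two distinct components of $\partial q(D)$, whose mutual distance is greater than $N/2$ by hypothesis.

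Then I two-color $\mathbb R^2\setminus\tilde\sigma$ into regions $B$ and $W$ (which is possible because $\tilde\sigma$ is the edge set of a planar Eulerian graph, see Remark \ref{verde_bis}), flip $u$ on $W\cap D\cap\mathbb Z^2$ to obtain a competitor $\tilde u$, and compute
\[
F(u,D)-F(\tilde u,D)=2\sum_{t=1}^r\bigl(l(\eta_t)-2\mu(\eta_t)\bigr).
\]
Minimality of $u$ forces $\mu(\eta_t)\geq l(\eta_t)/2$ for at least one $t$, so that $\eta_t$ is a separating path, and by the construction of $\sigma$ its length is greater than $N/2$. This contradicts Proposition \ref{globale}, since the assumption $\{c_{ij}\}\notin\tilde{\mathcal B}^{1/2}_p(N)$ forbids any separating path in $\mathbb Z^2$ of length greater than $N/2$.

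The main obstacle is the construction in the third case: one must ensure, using the hypothesis on the spacing of the components of $\partial q(D)$, that the Eulerian-circuit decomposition really yields at least one $\eta_t$ of length larger than $N/2$. Without this separation hypothesis the decomposition could contain only short pieces, and the switching argument would produce only a short separating path, which Proposition \ref{globale} does not rule out.
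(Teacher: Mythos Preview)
Your approach is exactly the paper's: the authors state that the proof of Proposition~\ref{path} follows Proposition~\ref{path_bis} verbatim, replacing Lemma~\ref{l_proba1} by Proposition~\ref{globale}, and that is what you do.

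One imprecision is worth tightening. You write that ``at least one $\eta_t$ has length strictly greater than $N/2$'', and later that the separating $\eta_t$ has length greater than $N/2$ ``by the construction of $\sigma$''. These two statements do not match: the energy comparison only tells you that \emph{some} $\eta_t$ is separating, with no control over which one, so you need \emph{every} $\eta_t$ to have length greater than $N/2$ (this is exactly property~(\ref{sigma_bis}) in the paper, with the bound ``for all $t$''). In cases~1 and~2 there is only one $\eta_t=\gamma$, so this is automatic. In case~3 the point is that the minimal circuit containing $\gamma$ has every vertex of order~$2$, hence is a simple cycle with no loop edges; consequently \emph{each} $\eta_t$ joins two distinct components of $\partial q(D)$ and therefore has length exceeding $N/2$. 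Once you state it this way, your final contradiction goes through.
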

The steps of the proofs are exactly the same as in the random case, by substituting the applications of Lemma \ref{l_proba1} with the corresponding applications of Proposition \ref{globale} (thus the logarithmic estimates become linear with $N$).

\goodbreak


\begin{thebibliography}{24}
\baselineskip=11pt

\bibitem{ABC}  R. Alicandro, A. Braides and M. Cicalese.
Phase and anti-phase boundaries in binary discrete systems: a variational viewpoint.
{\it Netw. Heterog. Media}  {\bf1} (2006), 85--107

\bibitem{AG}  R. Alicandro and M.S. Gelli.
Local and non local continuum limits of Ising type energies for spin systems.
{\it SIAM J. Math. Anal.} {\bf 48} (2016), 895--931.

\bibitem{BIV1} T. Bodineau, D. Ioffe, Y. Velenik,
Rigorous probabilistic analysis of equilibrium crystal shapes,
{\em  J. Math. Phys.} {\bf 41} (2000) 1033-1098.

\bibitem{Boivin} D. Boivin, First passage percolation: the stationary case.
{\em Probab. Th. Rel. Fields} {\bf 86} (1990), 491-499.

\bibitem{graph} A. Bondy and U.S.R. Murty.
{\it Graph Theory.} Springer, London, 2008.

\bibitem{GCB}  A. Braides.
{\it $\Gamma$-convergence for Beginners}.
Oxford University Press,  Oxford, 2002.

\bibitem{Seoul} A. Braides.
Discrete-to-continuum variational methods for lattice systems.
{\it Proceedings of the International Congress of Mathematicians
August 13--21, 2014, Seoul, Korea} (S. Jang, Y. Kim, D. Lee, and I. Yie, eds.) Kyung Moon Sa, Seoul, 2014, Vol.~IV, pp.~997--1015

\bibitem{BCPS}
A. Braides, V. Chiad\`o Piat, and M. Solci.
Discrete double-porosity models for spin systems.
{\it Math. Mech. Complex Syst.} 4 (2016), 79--102.

\bibitem{BC} A. Braides and M. Cicalese.
Interfaces, modulated phases and textures in lattice systems.
{\em Arch. Ration. Mech. Anal.}, 223 (2017), 977--1017.

\bibitem{BCY}A. Braides, M. Cicalese, and N. K. Yip.
Crystalline Motion of Interfaces Between Patterns.
{\em J. Stat. Phys.} 165 (2016), 274--319.

\bibitem{BGN} A. Braides, MS. Gelli, M. Novaga.
Motion and pinning of discrete interfaces.
{\em Arch. Ration. Mech. Anal.} {\bf 195} (2010), 469--498.

\bibitem{BP-mem}A. Braides, A. Piatnitski.
Overall properties of a discrete membrane with randomly distributed defects.
{\it Arch. Ration. Mech. Anal.}, {\bf 189} (2008), 301--323.

\bibitem{BP-dil} A. Braides, A. Piatnitski.
Variational problems with percolation: dilute spin systems at zero temperature. {\em J. Stat. Phys.} {\bf 149} (2012), 846--864.

\bibitem{BP-hom} A. Braides, A. Piatnitski.
Homogenization of surface and length energies for spin systems. {\em J. Funct. Anal.} {\bf 264} (2013), 1296--1328.

\bibitem{BSc}  A. Braides, G. Scilla.  Motion of discrete interfaces in periodic media. {\em Interfaces Free Bound.} {\bf 15} (2013), 451--476.

\bibitem{BSc2}  A. Braides, G. Scilla.
Nucleation and backward motion of discrete interfaces.
{\em C. R. Math. Acad. Sci. Paris} {\bf 351} (2013),  803--806.

\bibitem{BSo} A. Braides and M. Solci.
Motion of discrete interfaces through mushy layers.
{\em J. Nonlinear Sci.} {\bf26} (2016), 1031--1053

\bibitem{CDL}
L.A. Caffarelli and R. de la Llave.
Interfaces of ground states in Ising models with periodic coefficients.
{\em J. Stat. Phys.} {\bf 118} (2005), 687--719.


\bibitem{CePi1} R. Cerf, A. Pisztora, On the Wulff crystal in the Ising model,
{\em Ann. Probab.} {\bf 28}
(2000) 947-1017.

\bibitem{CeTe} R. Cerf , M. Th\'eret. Law of large numbers for the maximal flow through
a domain of $\mathbb R^d$ in first passage percolation. {\em Trans. Amer. Math.
Soc.} {\bf 363} (2011), 3665-3702.

\bibitem{euler}
L. Euler.
Solutio problematis ad geometriam situs pertinentis.
{\em Commentarii Academiae Scientiarum Petropolitanae} {\bf 8} (1741), 128--140.

\bibitem{Garet} {\rm O. Garet  and R. Marchand,} Large deviations for the chemical distance in supercritical Bernoulli percolation.
{\it Annals of Probability,} {\bf 35} (2007), 833--866.

\bibitem{Grimm} G.R. Grimmett. {\em Percolation}, Springer, 1999.

\bibitem{W} M. Wouts. Surface tension in the dilute Ising model. The Wulff construction.
{\em Comm. Math. Phys.} {\bf 289} (2009) 157--204.

\end{thebibliography}
\end{document}